\theoremstyle{plain}
\newtheorem{theorem}{Theorem}
\newtheorem{lemma}[theorem]{Lemma}
\theoremstyle{definition}
\newtheorem{step}{Step}
\newdimen\argwidth
\def\db[#1\db]{%
 \setbox0=\hbox{$#1$}\argwidth=\wd0
 \setbox0=\hbox{$\left[\box0\right]$}
  \advance\argwidth by -\wd0
 \left[\kern.3\argwidth\box0 \kern.3\argwidth\right]}
\newcommand{\vspan}{\operatorname{span}}
\newcommand{\scTor}{\mathop{{\scT}or}\nolimits}
\newcommand{\simto}{\xrightarrow{\sim}}
\newcommand{\bC}{\ensuremath{\mathbb{C}}}
\newcommand{\bP}{\ensuremath{\mathbb{P}}}
\newcommand{\scN}{\ensuremath{\mathcal{N}}}
\newcommand{\scO}{\ensuremath{\mathcal{O}}}
\newcommand{\scT}{\ensuremath{\mathcal{T}}}
\newcommand{\scOP}{\scO_{\bP^n}}
\newcommand{\scTP}{\scT_{\bP^n}}
\newcommand{\logD}{\scT_{\bP^n}(- \log D)}
\newcommand{\scND}{\scN_{D / \bP^n}}
\newcommand{\bCx}{\bC^{\times}}
\newcommand{\Der}{\operatorname{Der}}
\title{Logarithmic vector fields along smooth divisors
in projective spaces}
\author{Kazushi Ueda and Masahiko Yoshinaga}
\date{}
\begin{document}

\maketitle

\begin{abstract}
We show that
a smooth divisor in a projective space
can be reconstructed
from the isomorphism class of the sheaf of logarithmic vector fields
along it
if and only if its defining equation is
of Sebastiani--Thom type.
\end{abstract}

\section{Introduction}

Let $D$ be a smooth divisor in $\bP^n$
defined by a homogeneous polynomial $f$ of degree $k$.
We say that $f$ is {\em of Sebastiani--Thom type}
if $f$ can be represented as the sum
$$
 f(x_0, \dots, x_n) = f_1(x_0, \dots, x_l) + f_2(x_{l+1}, \dots, x_n)
$$
for a choice of a homogeneous coordinate $(x_i)_{i=0}^n$ of $\bP^n$
and some $0 \le l \le n-1$.

We study the Torelli problem for logarithmic vector fields
in the sense of Dolgachev and Kapranov \cite{Dolgachev-Kapranov}.
For a divisor $D$ in the projective space $\bP^n$,
the sheaf $\logD$ of logarithmic vector fields along $D$
is defined as the subsheaf of the tangent sheaf
$\scT_{\bP^n}$
whose section consists of vector fields
tangent to $D$.
It is the sheafification of
$$
  D_0(-\log f)
   = \{ \delta \in \Der_A \mid \delta f = 0 \},
$$
where $A$ is a homogeneous coordinate ring of $\bP^n$
and $f \in A$ is the defining polynomial of $D$.
A divisor $D$ is said to be {\em Torelli}
if the isomorphism class of $\logD$ as an $\scO_{\bP^n}$-module
determines $D$ uniquely.
The main theorem of Dolgachev and Kapranov
\cite{Dolgachev-Kapranov}
is a condition for an arrangement of
sufficiently many hyperplanes
to be Torelli.

The main result in this paper is the following:

\begin{theorem} \label{th:main}
A smooth divisor in a projective space is Torelli
if and only if its defining equation is not
of Sebastiani--Thom type.
\end{theorem}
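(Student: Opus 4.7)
The plan is to prove the two implications separately, using the tautological short exact sequence
$$
0 \to \logD \to \scTP \to \scO_D(k) \to 0 \qquad (\ast)
$$
(valid because $D$ is smooth and $\scND \cong \scO_D(k)$) as the main engine. The sheaf-theoretic reformulation of the Torelli question is the following: among subsheaves of $\scTP$ isomorphic to $\logD$ and with smooth cokernel of degree $k$, is there essentially only one up to the natural action of $\mathrm{PGL}_{n+1}$?

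For the \emph{necessity} direction, suppose $f = f_1(x_0,\ldots,x_l) + f_2(x_{l+1},\ldots,x_n)$. I would exhibit a positive-dimensional family of pairwise projectively inequivalent smooth divisors whose logarithmic tangent sheaves are mutually isomorphic. The decomposition of coordinates endows $D_0(-\log f)$ with extra structure: it contains the submodules of derivations tangent to $V(f_1)$ and to $V(f_2)$ separately, plus ``crossover'' derivations mixing the two blocks. I expect these extra symmetries to permit a deformation of the embedding $\logD \hookrightarrow \scTP$ while keeping the isomorphism class of $\logD$ fixed and producing new divisors via $(\ast)$. A parameter count against $\dim \mathrm{PGL}_{n+1}$ will then confirm that the resulting family covers projectively inequivalent divisors.

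For the \emph{sufficiency} direction, suppose $D'$ is another smooth divisor with $\log D' \cong \logD$ as $\scO_{\bP^n}$-modules. Composing the given isomorphism with the inclusion $\log D' \hookrightarrow \scTP$ yields a second embedding $\iota' : \logD \hookrightarrow \scTP$ with cokernel $\scO_{D'}(k)$, and it suffices to show that $\iota'$ is conjugate to the canonical embedding via an automorphism of $\logD$ and a projective automorphism of $\bP^n$. To this end, I would parametrize embeddings by computing $\Hom(\logD, \scTP)$ from the long exact sequence obtained by applying $\Hom(-, \scTP)$ to $(\ast)$, combined with the Euler sequence and standard cohomology on $\bP^n$. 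The main obstacle lies precisely here: identifying where the non-Sebastiani--Thom hypothesis enters the cohomological picture. My expectation is that it is needed to rule out ``block-like'' automorphisms of $\logD$ that would respect a hypothetical coordinate splitting. The bridge from the algebraic hypothesis on $f$ to this sheaf-theoretic uniqueness should be a direct analysis showing that any automorphism of $\logD$ moving the canonical embedding to a genuinely different embedding with smooth cokernel forces $f$ itself to be a sum in two disjoint sets of variables — which is exactly the Sebastiani--Thom condition.
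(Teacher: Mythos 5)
Your proposal does not close either direction, and in both cases the missing piece is the paper's actual key idea. For the necessity direction you only gesture at a ``deformation of the embedding'' plus a parameter count, but no family is ever constructed. The argument the paper uses is a one-line construction: if $f = f_1(x_0,\dots,x_l) + f_2(x_{l+1},\dots,x_n)$, then for every $(\mu,\nu)\in(\bCx)^2$ the $A$-module automorphism of $\Der_A$ sending $\partial/\partial x_i$ to $\mu^{-1}\,\partial/\partial x_i$ for $i\le l$ and to $\nu^{-1}\,\partial/\partial x_i$ for $i>l$ carries $D_0(-\log(f_1+f_2))$ onto $D_0(-\log(\mu f_1+\nu f_2))$, because $\delta(\mu f_1+\nu f_2)$ only sees the $\partial_i$ with $i\le l$ through $f_1$ and the $\partial_i$ with $i>l$ through $f_2$. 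Hence all divisors in this non-constant family have isomorphic logarithmic tangent sheaves, and no dimension count against $\mathrm{PGL}_{n+1}$ is needed.

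For the sufficiency direction your plan is to classify embeddings $\logD\hookrightarrow\scTP$ by computing $\Hom(\logD,\scTP)$, and you concede that you do not see where the non-Sebastiani--Thom hypothesis would enter; that is precisely the gap, and the paper takes a different route in which the hypothesis visibly bites. First, the isomorphism class of $\logD$ alone (with no reference to any embedding into $\scTP$) determines the degree $k-1$ part of the Jacobi ideal $J(f)$: for a divisor $E$ of degree $k-1$ the dimension of $H^0(\logD(-1)|_E)$ jumps exactly when the equation of $E$ lies in $J(f)$, as one sees from the sequence $0\to\logD(-1)|_E\to\scTP(-1)|_E\to\scO_{D\cap E}(k-1)\to 0$. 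Second, if two distinct smooth degree-$k$ divisors have equal Jacobi ideals, the pencil they span contains a singular member $F$ whose partial derivatives are linearly dependent, and a linear change of coordinates built from the relation $J(F)\subset J(f)$ forces $\partial^2 f/\partial X_i\partial X_j=0$ for $i\le l<j$, i.e., $f$ is of Sebastiani--Thom type. Nothing playing the role of this pencil-and-Hessian argument appears in your plan. Finally, your reformulation of the Torelli condition ``up to the natural action of $\mathrm{PGL}_{n+1}$'' is strictly weaker than the paper's definition, which requires the sheaf to determine $D$ \emph{uniquely} as a divisor; distinct projective translates of $D$ must also be excluded, which the Jacobi-ideal argument does but a classification modulo $\mathrm{PGL}_{n+1}$ would not.
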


The strategy for the proof is the following:
\begin{enumerate}
 \item
The Jacobi ideal of a smooth divisor $D$ of degree $k$
is determined
by the set of divisors $E$ of degree $k-1$
such that the dimension of $H^0(\logD(-1)|_E)$ jumps up.
 \item
A smooth divisor is determined by its Jacobi ideal
if and only if it is not of Sebastiani--Thom type.
 \item
A divisor $D$ is not Torelli
if its defining equation is of Sebastiani--Thom type.
\end{enumerate}

As a corollary of Theorem \ref{th:main},
we give another proof of the main theorem of
\cite{Ueda-Yoshinaga_SCC}
that a smooth plane cubic curve is Torelli
if and only if its $j$-invariant does not vanish.

\section{Jacobi ideals from logarithmic vector fields}

Let $D$ be a smooth divisor of degree $k$ in $\bP^n$
defined by a homogeneous polynomial $f$,
and $\logD \subset \scTP$ be the sheaf of
logarithmic vector fields along $D$.
We have an exact sequence
$$
 0 \to \logD \to \scTP \to \scND \to 0,
$$
where $\scND$ is the normal bundle
of $D$ in $\bP^n$,
and an isomorphism
$$
\begin{array}{cccc}
 d f : & \scND & \simto & \scO_D(k) \\
  & \rotatebox{90}{$\in$} & & \rotatebox{90}{$\in$} \\
  & X & \mapsto & X f \\
\end{array}
$$
of $\scOP$-modules.
By the Euler sequence
$$
 0 \to \scOP(-1) \to \scOP^{\oplus (n+1)} \to \scTP(-1) \to 0,
$$
the space $H^0(\scTP(-1))$ of global sections of $\scTP(-1)$
is spanned by
$
 \{ \partial / \partial x_i \}_{i=0}^n.
$
The image of the map
$$
 H^0(\scTP(-1)) \to H^0(\scO_D(k-1))
$$
induced by the composition
$$
 \scTP(-1) \to \scND(-1) \to \scO_D(k-1)
$$
is the restriction to $D$ of the degree $k-1$ part
$$
 J(f)_{k-1} = \vspan \{ \partial f / \partial x_i \}_{i=0}^n
$$
of the Jacobi ideal $J(f)$ of $f$.

\begin{lemma}
For a divisor $E$ of $\bP^n$ of degree $k-1$,
the dimension of \\
$H^0(\logD(-1)|_E)$ jumps up
if and only if the defining equation of $E$
is contained in the Jacobi ideal of $D$.
\end{lemma}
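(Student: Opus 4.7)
The plan is to compute $H^0(\logD(-1)|_E)$ via the long exact sequence attached to the Koszul-type short exact sequence
$$
 0 \to \logD(-k) \xto{\cdot g} \logD(-1) \to \logD(-1)|_E \to 0,
$$
which arises from tensoring the defining sequence $0 \to \scOP(-(k-1)) \to \scOP \to \scO_E \to 0$ of $E$ with the locally free sheaf $\logD(-1)$; here $g \in A_{k-1}$ denotes the defining polynomial of $E$, and local freeness of $\logD$ uses the smoothness of $D$.

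First I would establish the vanishings $H^0(\logD(-1)) = H^0(\logD(-k)) = 0$ (for $k \geq 2$). The second is immediate from $H^0(\scTP(-k)) = 0$ via the Euler sequence. For the first, the excerpt already identifies $H^0(\logD(-1))$ with the kernel of the map $\bC^{n+1} = H^0(\scTP(-1)) \to H^0(\scO_D(k-1)) = A_{k-1}$, $(a_i) \mapsto \sum a_i \partial f/\partial x_i$, and smoothness of $D$ makes $\partial f/\partial x_i$ linearly independent, giving injectivity. The cohomology sequence therefore reduces to
$$
 0 \to H^0(\logD(-1)|_E) \to H^1(\logD(-k)) \xto{\cdot g} H^1(\logD(-1)).
$$

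Next, for $m \in \{1, k\}$, I would use $0 \to \logD(-m) \to \scTP(-m) \to \scND(-m) \to 0$ together with $H^1(\scTP(-m)) = 0$ (standard from the Euler sequence) to identify
$$
 H^1(\logD(-1)) \cong A_{k-1}/J(f)_{k-1} \quad\text{and}\quad H^1(\logD(-k)) \cong H^0(\scO_D) = \bC.
$$
The first identification is essentially the content of the computation in the excerpt; the second uses $H^0(\scTP(-k)) = 0$ again. By naturality of the connecting homomorphism with respect to multiplication by $g$, the induced map $\cdot g$ on these $H^1$'s becomes the map $\bC \to A_{k-1}/J(f)_{k-1}$ sending $1 \mapsto [g]$. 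Its kernel is all of $\bC$ when $g \in J(f)_{k-1}$ and is $0$ otherwise, so $\dim H^0(\logD(-1)|_E)$ equals $1$ in the first case and $0$ in the second, which is the desired jump criterion.

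I expect the main obstacle to be the naturality argument identifying the cohomological $\cdot g$ map with multiplication by $[g]$ in $A_{k-1}/J(f)_{k-1}$, together with the uniform verification of $H^1(\scTP(-m)) = 0$ across all relevant $n$ (the only subtlety being $n = 2$, where one has to compare $H^2$ terms dualized by Serre duality on $\bP^2$). Both are routine with the Euler sequence, but they are what convert the abstract kernel computation into the concrete Jacobi-ideal statement of the lemma.
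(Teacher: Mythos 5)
Your route is genuinely different from the paper's. The paper restricts the defining sequence $0 \to \logD(-1) \to \scTP(-1) \to \scO_{D}(k-1) \to 0$ to $E$ (using $\scTor_1^{\scOP}(\scO_D,\scO_E)=0$) and works entirely with $H^0$: the image of $H^0(\scTP(-1)|_E) \to H^0(\scO_{D\cap E}(k-1))$ is $J(f)_{k-1}|_{D\cap E}$, and since $h^0(\scTP(-1)|_E)$ is constant in $E$, the jump occurs exactly when some nonzero element of $J(f)_{k-1}$ dies on $D\cap E$, i.e.\ lies in $(f,g)_{k-1}=\bC g$. You instead shift degree via the Koszul resolution of $\scO_E$ and compute $H^1$ of twists of $\logD$ on $\bP^n$. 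Both arguments ultimately reduce to the same linear algebra (the class of $g$ in $A_{k-1}/J(f)_{k-1}$), and your version has the merit of pinning down $h^0(\logD(-1)|_E)$ exactly ($0$ or $1$) rather than only detecting the jump; the paper's is shorter and avoids any $H^1(\scTP)$ computations.

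There is, however, one concrete gap: the asserted vanishing $H^1(\scTP(-k))=0$ is \emph{false} for $n=2$, $k=3$. Indeed $\scT_{\bP^2}(-3)=\scT_{\bP^2}\otimes\omega_{\bP^2}$, so by Serre duality $H^1(\scT_{\bP^2}(-3))\cong H^1(\Omega^1_{\bP^2})^{\vee}\cong \bC$; equivalently, in the Euler sequence the map $H^2(\scO(-3))\to H^2(\scO(-2))^{\oplus 3}$ has target zero. (For $n\ge 3$, or for $n=2$ and $k\ne 3$, your claim is correct.) This is precisely the case of smooth plane cubics, which is the one application the paper singles out, so it cannot be waved away. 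The conclusion $H^1(\logD(-3))\cong H^0(\scO_D)=\bC$ is still true there, but it needs an extra argument beyond the vanishing you invoke --- either show that the map $H^1(\scT_{\bP^2}(-3))\to H^1(\scO_D)$ in the long exact sequence is injective, or compute directly ($c_1(\logD)=0$, $c_2(\logD)=3$, so $\chi(\logD(-3))=-1$ while $h^0=h^2=0$, forcing $h^1=1$). With that one case repaired, the rest of your argument --- the naturality square identifying the induced map $\bC\to A_{k-1}/J(f)_{k-1}$ with $1\mapsto[g]$ --- goes through.
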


\begin{proof}

Since $D$ is smooth,
$$
 \scTor^{\scOP}_1(\scO_D, \scO_E) = 0
$$
and we have an exact sequence
$$
 0 \to \logD(-1)|_E
   \to \scTP(-1)|_E
   \to \scO_{D \cap E}(k-1)
   \to 0,
$$
from which follows the long exact sequence
\begin{align*}
 0 & \to H^0(\logD(-1)|_E)
  \to H^0(\scTP(-1)|_E)
  \to H^0(\scO_{D \cap E}(k-1)) \\
   & \to H^1(\logD(-1)|_E) \to \cdots.
\end{align*}
Note that the image of the map
$$
 H^0(\scTP(-1)|_E) \to H^0(\scO_{D \cap E}(k-1))
$$
is the restriction to $D \cap E$
of the degree $k-1$ part
of the Jacobi ideal of $D$.
Since the dimension of
$H^0(\scTP(-1)|_E)$ does not depend on $E$,
the dimension of $H^0(\logD(-1)|_E)$ jumps up
if and only if the defining equation of $E$ is contained
in the Jacobi ideal of $D$.
\end{proof}

\section{Divisors from their Jacobi ideals}

We prove the following in this section:

\begin{lemma}
If two smooth distinct divisors in $\bP^n$
have identical Jacobi ideals,
their defining equations are of Sebastiani--Thom type.
\end{lemma}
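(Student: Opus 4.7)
The plan is to exploit the invertible linear change of basis relating the partial derivatives of $f$ and $g$, together with the symmetry of the Hessian of $f$.

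I would first introduce the invertible matrix $P=(P_{ij})$ defined by $\partial g/\partial x_i=\sum_j P_{ij}\,\partial f/\partial x_j$. Such a $P$ exists and is invertible because smoothness of $V(f)$ and $V(g)$ implies that each family of partials is linearly independent: a nontrivial relation $\sum_i c_i\,\partial f/\partial x_i=0$ would, after a linear change of coordinates, make $f$ independent of one variable and render $V(f)$ a singular cone. Differentiating the defining relation in $x_l$ and invoking symmetry of mixed second partials then yields the polynomial matrix identity
$$
PH=HP^T,\qquad H_{jl}:=\frac{\partial^2 f}{\partial x_j\,\partial x_l}.
$$
A linear change of coordinates on $\bC^{n+1}$ brings $P$ into Jordan normal form while preserving this identity.

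The main case is when $P$ has at least two distinct eigenvalues. For any pair of Jordan blocks of $P$ with distinct eigenvalues $\mu\ne\mu'$, comparing entries of $PH$ and $HP^T$ across the corresponding cross-block produces relations whose ``corner'' equation is $(\mu-\mu')H_{ij}=0$; a short recursion then propagates this vanishing through the entire cross-block. Hence $\partial^2 f/\partial x_i\,\partial x_j\equiv 0$ whenever $i,j$ lie in different generalized eigenspaces of $P$, and monomial-wise inspection yields a decomposition $f=\sum_\mu f_\mu$ in which each $f_\mu$ involves only those coordinates belonging to the $\mu$-eigenspace of $P$. This is precisely the Sebastiani--Thom form for $f$, and by symmetry of the hypothesis also for $g$.

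The delicate remaining case is that $P$ has a single eigenvalue $\lambda$. If $P=\lambda I$ then $\nabla(g-\lambda f)=0$, giving $g=\lambda f$ and contradicting $V(f)\ne V(g)$. Otherwise $N:=P-\lambda I$ is a nonzero nilpotent; setting $h:=g-\lambda f$ one has $\nabla h=N\nabla f$, and in coordinates adapted to a Jordan block of $N$ of size $m\ge 2$ this forces $f$ into the restrictive shape
$$
f=c\,x_0^{k-1}x_{m-1}+A(x_0,\dots,x_{m-2})+D(x_0,x_m,\dots,x_n).
$$
For $k\ge 3$ the coordinate point with $x_{m-1}=1$ and all other coordinates zero lies on $V(f)$ with vanishing gradient, contradicting smoothness; for $k=2$ the lemma is trivial, since every smooth quadric is of Sebastiani--Thom type. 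This last step is the main obstacle: the identity $PH=HP^T$ gives no cross-block vanishing when $P$ has only one eigenvalue, so smoothness of $V(f)$ must be invoked directly via the explicit singular-point construction above.
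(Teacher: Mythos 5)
Your argument is correct in substance, but it takes a genuinely different route from the paper's. The paper works with a singular member $F$ of the pencil spanned by $f$ and $g$: after a first coordinate change kills $\partial_0 F,\dots,\partial_l F$, a determinant argument (using smoothness of $V(f)$) allows a second, unipotent coordinate change after which $\partial_i f\in J(F)_{k-1}$ for $i>l$, and the vanishing of the mixed Hessian entries of $f$ drops out with no case distinction. You instead encode the hypothesis in the invertible matrix $P$ with $\nabla g=P\nabla f$, derive $PH=HP^{T}$ from symmetry of the Hessian of $g$, and split on the spectrum of $P$: Sylvester's theorem on $AX=XB$ kills the cross-blocks of $H$ between distinct generalized eigenspaces, giving an (a priori finer, multi-factor) Sebastiani--Thom splitting of both $f$ and $g$, while the single-eigenvalue case is excluded by exhibiting a singular point. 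The two proofs are of course related --- the paper's singular pencil member $F$ is exactly your $g-\lambda f$ for $\lambda$ an eigenvalue of $P$ --- but your version buys an explicit eigenspace decomposition at the price of having to handle the nilpotent case separately, which the paper's uniform argument avoids.

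Two repairs are needed, neither fatal. First, the ``restrictive shape'' you assert in the nilpotent case is too strong: with the block relations $\partial_i h=\partial_{i+1}f$ for $0\le i\le m-2$ and $\partial_{m-1}h=0$, the coefficient of $x_{m-1}$ in $f$ is $\partial_{m-2}h$, which may involve variables outside the block (e.g.\ with two nilpotent blocks of size $2$ and $h=x_0^{\,k-1}x_2$ one gets a term $(k-1)x_0^{\,k-2}x_1x_2$ in $f$). What is actually true, and all you need, is that $h$ --- hence $\partial_{m-1}f=\partial_{m-2}h$ --- is independent of $x_{m-1}$, so $f=x_{m-1}Q+R$ with $Q,R$ homogeneous and free of $x_{m-1}$; then every first partial of $f$ vanishes at the coordinate point $e_{m-1}$ as soon as $k\ge 3$, which is your intended contradiction. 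Second, dispose of $k=1$ at the outset (linear forms are of Sebastiani--Thom type by convention, and their partials are constants, so $P$ is not even defined there), as the paper does with its remark that any linear form is of Sebastiani--Thom type.
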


\begin{proof}
We divide the proof into steps.
Let $f$ and $g$ be homogeneous polynomials
of degree $k$
defining distinct smooth hypersurfaces
such that
their Jacobi ideals 
$
 J(f) 
$
and $J(g)$ are identical.

\begin{step}
The pencil over $f$ and $g$ contains a polynomial $F$
such that
$
 \partial_0 F = \dots = \partial_l F = 0
$
and
$\{ \partial_i F \}_{i=l+1}^n$ is linearly independent
for some integer $l$ and
a suitable choice of a homogeneous coordinate
$(x_i)_{i=0}^n$ of $\bP^n$.
\end{step}

Indeed,
any pencil of projective hypersurfaces
contains a singular element $F$,
and the assumption $J(f) = J(g)$ implies that
$\partial_0 F, \dots, \partial_n F$
are linearly dependent.
Let $l$ be $n$ minus the dimension
of the linear span of $\{ \partial_i F \}_{i=0}^n$.
Then we can choose a homogeneous coordinate
so that
$
 \partial_i F = 0
$
for $i=0, \dots, l$ and
$\{ \partial_i F \}_{i=l+1}^n$ is linearly independent.
Note that one has $l < n$ since the divisors
defined by $f$ and $g$ are distinct.

\begin{step}
There exists a matrix $(a_{ij})_{i,j=0}^n$
such that $\det (a_{i, j})_{i,j=l+1}^{n} \ne 0$ and
$$
 \frac{\partial F}{\partial x_i}
  = \sum_{j=0}^n a_{ij} \frac{\partial f}{\partial x_j}
$$
for $i = 0, \dots, n$.
\end{step}

The existence of the matrix $(a_{ij})_{i,j=0}^n$
follows from the inclusion $J(F) \subset J(f)$.
We will show that
if $\det (a_{i, j})_{i,j=l+1}^{n}$ vanishes,
then the hypersurface defined by $f$ is singular.
Indeed,
the vanishing of $\det (a_{i, j})_{i,j=l+1}^{n}$
and linear independence
of $\{ \partial_i F \}_{i=l+1}^n$
and of $\{ \partial_i f \}_{i=0}^n$
imply that some linear combination of
$\{ \partial_i f \}_{i=0}^l$
is a linear combination of
$\{ \partial_i F \}_{i=l+1}^n$.
Then one can choose a homogeneous coordinate
so that $\partial_0 f$ is a linear combination of
$\{ \partial_j F \}_{j=l+1}^n$.
Assume that $\deg f \ge 2$,
since any linear form is of Sebastiani--Thom type.
Note that $F$ does not depend on $\{ x_i \}_{i=0}^l$
since $\partial_i F = 0$ for $i = 0, \dots, l$.
It follows that $[1:0:\dots:0] \in \bP^n$ is a singular point of
the hypersurface defined by $f$,
since $f(x_0, \dots, x_n)$ is the sum of
$x_0$ times some linear combination of
$\{ \partial_j F \}_{j=l+1}^n$
and terms which are zero
at $x_1 = \dots = x_n = 0$.

\begin{step}
There is a homogeneous coordinate $(X_i)_{i=0}^n$ such that
$\partial_i F = 0$ for $i = 0, \dots, l$ and
$\partial_i f \in J(F)$ for $i = l+1, \dots, n$.
\end{step}

Since the $(n-l) \times (n-l)$ matrix
$(a_{ij})_{i,j=l+1}^n$ is invertible,
one can find an $(n-l) \times (n+1)$ matrix
$(b_{ij})$
such that
$$
 \sum_{j=0}^l b_{ij} \frac{\partial f}{\partial x_j}
   + \frac{\partial f}{\partial x_i}
  \in J(F)
$$
for $i = l+1, \dots, n$.
Now make the projective coordinate transformation
from $(x_i)_{i=0}^n$ to $(X_i)_{i=0}^n$
defined by
$$
 x_j =
  \begin{cases}
    X_j + \sum_{i=l+1}^n b_{i j} X_i & 0 \le j \le l, \\
    X_j & l+1 \le j \le n.  
  \end{cases}
$$
Then one has
\begin{align*}
 \frac{\partial f}{\partial X_i}
  &= \sum_{j=0}^n
      \frac{\partial x_j}{\partial X_i}
      \frac{\partial f}{\partial x_j} \\
  &=  \begin{cases}
       \displaystyle{
        \frac{\partial f}{\partial x_i}
       } & i = 0, \dots, l, \\
       \displaystyle{
        \sum_{j=0}^n b_{ij}
          \frac{\partial f}{\partial x_j}
         + \frac{\partial f}{\partial x_i}
       } & i = l + 1, \dots, n.
      \end{cases}
\end{align*}
This implies
$$
 \frac{\partial F}{\partial X_i}
  = \frac{\partial F}{\partial x_i}
  = 0
$$
for $0 \le i \le l$
and
$$
 \frac{\partial f}{\partial X_i}
  \in J(F)
$$
for $l+1 \le i \le n$.

\begin{step}
$f$ is of Sebastiani--Thom type.
\end{step}

The fact
$$
 \frac{\partial F}{\partial X_0}
  = \dots
  = \frac{\partial F}{\partial X_l}
  = 0
$$
and
$$
 \frac{\partial f}{\partial X_i}
  \in J(F)
$$
for $l+1 \le i \le n$
shows
$$
 \frac{\partial^2 f}{\partial X_i \partial X_j} = 0
$$
for $0 \le i \le l$ and $ l+1 \le j \le n$.
This implies that $f$ is of Sebastiani--Thom type.

\end{proof}

Since the isomorphism class
of the sheaf of logarithmic vector fields
along the divisor defined by
$
 \mu F(x_0, \dots, x_l) + \nu G(x_{l+1}, \dots, x_n)
$
does not depend on the choice of $(\mu, \nu) \in (\bCx)^2$,
a divisor is not Torelli
if its defining equation is of Sebastiani--Thom type.

\section{Smooth plane cubic curves}

Theorem \ref{th:main} immediately yields
the following:

\begin{theorem}[{\cite[Theorem 7]{Ueda-Yoshinaga_SCC}}]
A smooth plane cubic curve is Torelli
if and only if its $j$-invariant does not vanish.
\end{theorem}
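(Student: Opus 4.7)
The plan is to deduce the statement directly from Theorem~\ref{th:main}, reducing it to the purely projective assertion that a smooth plane cubic in $\bP^2$ is of Sebastiani--Thom type if and only if its $j$-invariant vanishes. The first step is to unpack what Sebastiani--Thom means when $n=2$: the only possibilities are $l=0$ or $l=1$, and in either case one of the two summands is a cubic in a single variable, hence a nonzero scalar multiple of $x_i^3$. Thus, after permuting coordinates and rescaling, any Sebastiani--Thom smooth cubic may be written as $f = x_0^3 + g(x_1,x_2)$ for some binary cubic form $g$, and smoothness of the zero locus of $f$ forces $g$ to have three distinct roots on $\bP^1$.

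For the forward direction, I would use the transitivity of $PGL_2$ on unordered triples of distinct points of $\bP^1$: there exists a linear change of the pair $(x_1,x_2)$ that brings $g$ to $x_1^3 + x_2^3$. After an additional rescaling of $x_0$, the cubic becomes projectively equivalent to the Fermat cubic $x_0^3 + x_1^3 + x_2^3$, whose $j$-invariant is $0$. For the converse, I would invoke the classical fact that the $j$-invariant is a complete projective invariant of smooth plane cubics (for instance via reduction to Weierstrass normal form, where two such normal forms with the same $j$-invariant differ by the usual rescaling $(a,b)\mapsto(\lambda^4 a,\lambda^6 b)$, i.e.\ by an element of $PGL_3$). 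Since the Fermat cubic has $j=0$ and is visibly of Sebastiani--Thom type, every smooth plane cubic with $j=0$ is $PGL_3$-equivalent to it and is therefore itself of Sebastiani--Thom type.

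Combining these two implications with Theorem~\ref{th:main} yields the stated equivalence. There is no serious obstacle in this plan; the only mildly nontrivial ingredient is the classical projective classification of smooth plane cubics by $j$, and the only thing one has to be careful about is the low-dimensional bookkeeping in the definition of Sebastiani--Thom type, which in $\bP^2$ leaves essentially no room except the Fermat shape.
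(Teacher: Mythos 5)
Your proposal is correct and follows essentially the same route as the paper: reduce via Theorem~\ref{th:main} to showing that a smooth plane cubic is of Sebastiani--Thom type if and only if it is projectively equivalent to the Fermat cubic, the latter being exactly the $j=0$ locus. The only difference is that you spell out the $PGL_2$-transitivity argument that the paper dismisses as ``obvious.''
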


\begin{proof}
Since a smooth plane cubic curve has a vanishing $j$-invariant
if and only if it is defined by the Fermat polynomial
$$
 x^3 + y^3 + z^3
$$
for a suitable choice of a homogeneous coordinate,
it suffices to show that
any cubic polynomial of the form
$$
 f(x) + g(y, z)
$$
can be brought to the Fermat polynomial
by a projective linear coordinate change,
which is obvious.
\end{proof}

{\bf Acknowledgment:}
We thank Igor Dolgachev for a stimulating lecture
in Kyoto in December 2006.
An essential part of this paper has been worked out
during the conference in Krasnoyarsk in August 2007,
and we thank the organizers for hospitality
and all the participants for creating an inspiring atmosphere.
K. U. is supported
by Grant-in-Aid for Young Scientists (No.18840029).
M. Y. is supported by JSPS Postdoctoral Fellowship
for Research Abroad.

\bibliographystyle{plain}

\noindent
Kazushi Ueda

Department of Mathematics,
Graduate School of Science,
Osaka University,
Machikaneyama 1-1,
Toyonaka,
Osaka,
560-0043,
Japan.

{\em e-mail address}\ : \  kazushi@math.sci.osaka-u.ac.jp

\ \\

\noindent
Masahiko Yoshinaga

Department of Mathematics,
Graduate School of Science,
Kobe University,
1-1, Rokkodai, Nada-ku, Kobe 657-8501, Japan

{\em e-mail address}\ : \ myoshina@math.kobe-u.ac.jp

\end{document}